\documentclass[reqno,12pt]{amsart}
\usepackage{graphicx}
\usepackage{epsfig}
\usepackage{amssymb,amsthm,amsmath,amsfonts,amstext,mathrsfs,fancybox}
\usepackage{enumerate}
\usepackage{latexsym}
\usepackage{url}
\usepackage{fancyhdr}
\usepackage{color}
\usepackage[unicode]{hyperref}

\textwidth 17.0 true cm \oddsidemargin -0.4 true cm \evensidemargin -0.4
 true cm \textheight 22.7 true cm \topmargin -0.5 true cm

\hypersetup{colorlinks=true,
linkcolor=blue,
citecolor=blue,
urlcolor=blue,
filecolor=blue,
bookmarksnumbered=true,
pdfstartview=FitH,
pdfhighlight=/N}

\newtheorem{thm}{Theorem}

\newtheorem{prop}{Proposition}

\newtheorem{defin}{Definition}

\input xy
\xyoption{all}

\def\d{\,{\rm{d}}}

\def\kk{\varkappa}
\def\E{\mathfrak{E}}
\def\f{\mathscr{F}}

\def\m{\,{\mathfrak{m}}}

\def\q{\,{\mathbf{q}}}

\title[The Minkowski question mark function]
{ The Minkowski $?(x)$ function, a class of singular measures, quasi-modular and mean-modular forms. I}
\author[G. Alkauskas]{Giedrius Alkauskas}
\address{Vilnius University, Department of Mathematics and Informatics, Naugarduko 24, LT-03225 Vilnius, Lithuania}
\email{giedrius.alkauskas@mif.vu.lt}

\begin{document}
\begin{abstract}
The Minkowski question mark function is a rich object which can be explored from the perspective of dynamical systems, complex dynamics, metric number theory, multifractal analysis, transfer operators, integral transforms, and as a function itself via analysis of continued fractions and convergents. Our permanent target, however, was to get arithmetic interpretation of the moments of $?(x)$ (which are relatives of periods of Maass wave forms) and to relate the function $?(x)$ to certain modular objects. In this paper we establish this link, embedding $?(x)$ not into the modular-world itself, but into a space of functions which are generalizations and which we call \emph{mean-modular forms}. For this purpose we construct a wide class of measures, and also investigate modular forms for congruence subgroups which additionally satisfy the three term functional equation. From this perspective, the modular forms for the whole modular group as well as the Stieltjes transform of $?(x)$ (the dyadic period function) minus the Eisenstein series of weight $2$ fall under the same uniform definition. The main result is the construction of the canonical isomorphism between the spaces of quasi-modular forms and mean-modular forms. This gives unexpected Minkowski question mark function-related interpretation of quasi-modular forms.
\end{abstract}
\pagestyle{fancy}
\fancyhead{}
\fancyhead[LE]{{\sc The Minkowski $?(x)$ function}}
\fancyhead[RO]{{\sc G. Alkauskas}}
\fancyhead[CE,CO]{\thepage}
\fancyfoot{}
\date{July 2, 2015}
\subjclass[2010]{Primary 11F11, 26A30, 11F03}
\keywords{Minkowski question mark function, quasi-modular forms, singular measures, dyadic period function}
\thanks{The research of the author was supported by the Research Council of Lithuania grant No. MIP-072/2015. The author sincerely acknowledges hospitality of Prof. J. Steuding in W\"{u}rzburg University, Prof. D. Mayer in Clausthal Technical University, Max Planck Institute for Mathematics in Bonn, and also thanks Prof. N. Diamantis and Prof. K. Bringmann for showing interest in these results.}

\maketitle

\section{Introduction}
The relation between continued fractions and modular functions is and old and deep subject; see, for example, \cite{cais,duke, manin}. In this paper we provide yet another example of this relation of a very different sort.\\
  
The Minkowski question mark function $?(x):[0,1]\mapsto[0,1]$ is defined by
\begin{eqnarray*}
?([0,a_{1},a_{2},a_{3},\ldots])=2\sum\limits_{\ell=1}^{\infty}(-1)^{\ell+1}2^{-\sum_{j=1}^{\ell}a_{j}},\quad a_{j}\in\mathbb{N};
\end{eqnarray*}
$x=[0,a_{1},a_{2},a_{3},\ldots]$ stands for a representation of $x$ by a regular continued fraction. In view of the current paper, note that the Minkowski question mark function can be defined also in terms of semi-regular continued fractions. These are given by 
\begin{eqnarray*}
[[b_{1},b_{2},b_{3},\ldots]]=\cfrac{1}{b_{1}-\cfrac{1}{b_{2}-\cfrac{1}{b_{3}-\ddots}}},
\end{eqnarray*}
where integers $b_{i}\geq 2$. Each real irrational number $x\in(0,1)$ has a unique representation in this form, and rationals $x\in(0,1)$ have two representations: one finite and one infinite which ends in $[[2,2,2,\ldots]]=1$. It was proved in \cite{alkauskas3} that
\begin{eqnarray*}
?([[b_{1},b_{2},b_{3},\ldots]])=
\sum\limits_{\ell=1}^{\infty}2^{\ell-\sum_{j=1}^{\ell}b_{j}}.
\end{eqnarray*}
The function $?(x)$ is continuous, strictly increasing, and singular. For $x\in[0,1]$, it satisfies functional equations
\begin{eqnarray*}
?(x)=\left\{\begin{array}{c@{\qquad}l} 1-?(1-x),
\\ 2?\big{(}\frac{x}{x+1}\big{)}. \end{array}\right.
\end{eqnarray*}
These equations are responsible for the rich arithmetic nature of $?(x)$ and its relations (at least analogies) to the objects in the modular-world \cite{alkauskas1,alkauskasi}: for example, if we define 
\begin{eqnarray*}
G(z)=\int\limits_{0}^{1}\frac{x}{1-xz}\d ?(x),
\end{eqnarray*}
then $G(z)=o(1)$ if $z\rightarrow\infty$ and the distance to $\mathbb{R}_{+}$ remains bounded away from $0$, and
\begin{eqnarray*}
\frac{1}{z}+\frac{1}{z^{2}}G\Big{(}\frac{1}{z}\Big{)}+2G(z+1)=G(z),\quad 
z\in\mathbb{C}\setminus[1,\infty).
\end{eqnarray*}
In this paper we exhibit explicitly the connection of $?(x)$ to the modular world. The factor $``2"$ in the above formula - an intrinsic constant which comes from the dyadic nature of $?(x)$ - was always an obstacle which prevented an application of many techniques (Hecke operators, modularity, Fourier series) to the theory of $?(x)$. Now it appears that there exists a natural way to integrate $?(x)$ into the modular world, and this factor $``2"$ is no longer an obstacle but rather the reason why this integration is possible. For this purpose, first, we construct a wide generalization of $?(x)$.
\section{A class of functions}
Here we present a new way to construct a wide class of continuous fractal functions which encode the self-similarity via semi-regular continued fractions.
\begin{prop}\label{prop1}
Let $\mathbf{q}=\{q_{\ell}:2\leq \ell<\infty\}$ be the sequence of complex numbers such that
\begin{eqnarray*}
\sum\limits_{\ell=2}^{\infty}q_{\ell}=1,\quad
\sum\limits_{\ell=2}^{\infty}|q_{\ell}|<+\infty,\quad\sup\limits_{\ell}|q_{\ell}|<1.
\end{eqnarray*}
Then there exists the function $\mu=\mu_{\q}:[0,1]\mapsto\mathbb{C}$ with the following properties.
\begin{itemize}
\item[1)] It is continuous, $\mu(0)=0$, $\mu(1)=1$.
\item[2)] The function $\mu$ is of bounded variation. If all $q_{\ell}$ are real and non-negative, then $\mu$ is non-decreasing; if all $q_{\ell}$ are strictly positive, then $\mu$ is strictly increasing.
\item[3)] The function $\mu$ has the following self-similarity property:
\begin{eqnarray*}
\mu\Big{(}\frac{1}{\ell-x}\Big{)}=q_{\ell}\cdot\mu(x)+\sum\limits_{j=\ell+1}^{\infty}q_{j},\quad 2\leq \ell<\infty,\quad x\in[0,1].
\end{eqnarray*}
\item[4)] if $q_{\ell}=2^{1-\ell}$, $\ell\geq 2$, then $\mu(x)=?(x)$.
\end{itemize}
\end{prop}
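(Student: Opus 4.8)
The plan is to realize $\mu$ as the unique fixed point of a contraction on $C[0,1]$ built from the self-similarity relation, and then to read off the four properties. Write $\varphi_\ell(x)=\frac{1}{\ell-x}$, so that $\varphi_\ell$ maps $[0,1]$ homeomorphically and increasingly onto $[\frac1\ell,\frac1{\ell-1}]$ and the images tile $(0,1]$ with the only accumulation point at $0$. Put $S_m=\sum_{j=m}^{\infty}q_j$, so that $S_2=1$, $S_m\to0$, and $q_\ell+S_{\ell+1}=S_\ell$. Define an operator $T$ on $\mathcal{A}=\{f\in C[0,1]:f(0)=0,\ f(1)=1\}$ by prescribing, on each subinterval, $(Tf)(\varphi_\ell(x))=q_\ell f(x)+S_{\ell+1}$ together with $(Tf)(0)=0$. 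The identities $q_\ell+S_{\ell+1}=S_\ell$ and $S_2=1$ make the one-sided values at every interior breakpoint $\frac1\ell$ agree (both equal $S_{\ell+1}$) and give $(Tf)(1)=1$; continuity at $0$ holds because on the $\ell$-th interval $Tf$ stays within $|q_\ell|\,\|f\|_\infty+|S_{\ell+1}|\to0$. Hence $T:\mathcal{A}\to\mathcal{A}$, and since the difference of two elements of $\mathcal{A}$ vanishes at $0$ and $1$ one gets $\|Tf-Tg\|_\infty\le(\sup_\ell|q_\ell|)\,\|f-g\|_\infty$ with $\sup_\ell|q_\ell|<1$. The Banach fixed point theorem produces a unique $\mu\in\mathcal{A}$, which is continuous with $\mu(0)=0$, $\mu(1)=1$ (property 1) and satisfies the self-similarity (property 3) by construction.

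Iterating the fixed-point equation along a semi-regular continued fraction $x=[[b_1,b_2,\dots]]$ gives the absolutely convergent expansion
\begin{eqnarray*}
\mu([[b_1,b_2,\dots]])=\sum_{k=1}^{\infty}\Big(\prod_{j=1}^{k-1}q_{b_j}\Big)S_{b_k+1},
\end{eqnarray*}
the tail being controlled by $|\prod_{j\le k}q_{b_j}|\le(\sup_\ell|q_\ell|)^{k}$. Specializing to $q_\ell=2^{1-\ell}$ one computes $S_m=2^{2-m}$, whence the $k$-th term equals $2^{\,k-\sum_{j=1}^{k}b_j}$ and the expansion reproduces exactly the series for $?([[b_1,b_2,\dots]])$ quoted above; this is property 4. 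For the monotonicity half of property 2, assume first $q_\ell\ge0$. Then $T$ preserves the cone of non-decreasing functions in $\mathcal{A}$: on each interval $Tf$ is an increasing affine image of $f$, and as the breakpoint index $\ell$ decreases the values $S_{\ell+1}$ increase up to $1$, so no descent is created across breakpoints. The fixed point is therefore non-decreasing, and when every $q_\ell>0$ a short argument via the self-similarity (a flat spot would propagate to a flat spot of the whole graph) upgrades this to strict monotonicity.

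The main obstacle is the bounded-variation assertion in the general complex case. Interval by interval the self-similarity gives, since $\varphi_\ell$ is a monotone bijection onto $[\frac1\ell,\frac1{\ell-1}]$, that the variation of $\mu$ there equals $|q_\ell|\,V(\mu)$, where $V(\mu)$ is the total variation on $[0,1]$; if $\mu$ is of bounded variation then continuity at $0$ lets one add these up over the tiling to obtain
\begin{eqnarray*}
V(\mu)=\Big(\sum_{\ell=2}^{\infty}|q_\ell|\Big)\,V(\mu).
\end{eqnarray*}
In the real non-negative case $\sum_\ell|q_\ell|=\sum_\ell q_\ell=1$, this relation carries no information, and monotonicity yields $V(\mu)=\mu(1)-\mu(0)=1$ outright, so property 2 holds there. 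For general $\mathbf{q}$ one only knows $\sum_\ell|q_\ell|\ge|\sum_\ell q_\ell|=1$, and when this exceeds $1$ the displayed relation would force $V(\mu)=0$; thus the finite-variation claim cannot come from the crude self-similar scaling and is the genuine crux. I would therefore try to bound the variation directly from the absolutely convergent series, summing the across-cylinder increments at each depth by Abel summation and using $\sup_\ell|q_\ell|<1$ to control the geometric factors $\prod_{j<k}q_{b_j}$; pinning down whether $\sup_\ell|q_\ell|<1$ alone suffices, or whether the normalization $\sum_\ell|q_\ell|=1$ must be invoked, is the step I expect to be hardest. The remaining properties follow cleanly from the fixed-point construction.
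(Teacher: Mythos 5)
Your construction is the paper's construction in different clothing: the paper runs the Picard iteration $\mu_{w+1}=T\mu_{w}$ from $\mu_{0}(x)=x$ and sums the telescoping series with the geometric bound $\delta^{w}$, $\delta=\sup_{\ell}|q_{\ell}|$, which is exactly your contraction-mapping argument; your verifications of properties 1), 3), 4) and of the two monotonicity statements are correct, and are in fact more detailed than the paper's proof, which ends with ``satisfies all of the needed properties, as can be checked'' and checks nothing beyond continuity of the limit.

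The step you leave open --- bounded variation for general complex $\mathbf{q}$ --- is not a gap you can close, because that claim is false under the stated hypotheses, and your own scaling computation already shows it. Additivity of variation over the adjacent intervals $I_{\ell}=[\frac{1}{\ell},\frac{1}{\ell-1}]$ gives, for every finite $L$,
\begin{equation*}
V_{[0,1]}(\mu)\;\ge\;\sum_{\ell=2}^{L}V_{I_{\ell}}(\mu)\;=\;\sum_{\ell=2}^{L}|q_{\ell}|\,V_{[0,1]}(\mu)
\end{equation*}
(no continuity at $0$ is needed for this inequality, only for the reverse one). Since $V_{[0,1]}(\mu)\ge|\mu(1)-\mu(0)|=1>0$, finiteness of $V_{[0,1]}(\mu)$ forces $\sum_{\ell}|q_{\ell}|\le 1$; equivalently, whenever $\sum_{\ell}|q_{\ell}|>1$ the function has \emph{unbounded} variation. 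Moreover, by the equality case of the triangle inequality, $\sum_{\ell}q_{\ell}=1$ together with $\sum_{\ell}|q_{\ell}|=1$ forces every $q_{\ell}$ to be a non-negative real. Hence $\mu_{\mathbf{q}}$ is of bounded variation if and only if all $q_{\ell}\ge 0$, i.e.\ exactly in the monotone case you have already handled; the paper's own example $\mathbf{q}=(-\frac{1}{7},\frac{4}{7},\frac{4}{7})$ (Figure 3) has $\sum_{\ell}|q_{\ell}|=\frac{9}{7}$ and therefore infinite variation. Consequently your proposed fallback --- estimating the variation directly from the cylinder series --- cannot succeed, and there is nothing left to ``pin down'': $\sup_{\ell}|q_{\ell}|<1$ does not suffice, and no hypothesis short of $q_{\ell}\ge 0$ does. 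The right move is to turn your scaling relation into this refutation and record that property 2) of the proposition must be weakened accordingly; the paper's proof, which never addresses variation at all, silently passes over exactly this point.
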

\begin{proof}To construct such a function, we use iterations. As an initial state, set $\mu_{0}(x)=x$, $x\in[0,1]$. Then define $\mu_{w+1}$ piecewise recurrently by
\begin{eqnarray*}
\mu_{w+1}(x)=q_{\ell}\cdot\mu_{w}\Big{(}\ell-\frac{1}{x}\Big{)}+\sum\limits_{j=\ell+1}^{\infty}q_{j},\quad
x\in\Big{[}\frac{1}{\ell},\frac{1}{\ell-1}\Big{]},\quad w\geq 0.
\end{eqnarray*}
By induction we see that $\mu_{w+1}(0)=0$, $\mu_{w+1}(1)=1$, and that $\mu_{w}$ is continuous. Now, consider the following series
\begin{eqnarray}
\mu_{0}(x)+\sum\limits_{w=0}^{\infty}\big{(}\mu_{w+1}(x)-\mu_{w}(x)\big{)}.
\label{serr}
\end{eqnarray}
Let $\sup_{\ell}|q_{\ell}|=\delta<1$, and $\sup_{[0,1]}|\mu_{1}(x)-\mu_{0}(x)|=M$. By the very construction,
\begin{eqnarray*}
\mu_{w+1}(x)-\mu_{w}(x)=q_{\ell}\cdot\Big{(}\mu_{w}\Big{(}\ell-\frac{1}{x}\Big{)}-\mu_{w-1}\Big{(}\ell-\frac{1}{x}\Big{)}\Big{)},
\quad x\in\Big{[}\frac{1}{\ell},\frac{1}{\ell-1}\Big{]},\quad w\geq 1.
\end{eqnarray*}
So, for $w\geq 1$,
\begin{eqnarray*}
\sup\limits_{x\in[0,1]}|\mu_{w+1}(x)-\mu_{w}(x)|\leq \delta\cdot\sup\limits_{x\in[0,1]}|\mu_{w}(x)-\mu_{w-1}(x)|.
\end{eqnarray*}
Thus, the series (\ref{serr}) is majorized by the series $\sum_{w} M\delta^{w}$, and so the function
\begin{eqnarray*}
\mu(x)=\lim\limits_{w\rightarrow\infty}\mu_{w}(x)
\end{eqnarray*}
is continuous and satisfies all of the needed properties, as can be checked.
\end{proof}
We call this function $\mu_{\q}$ the \emph{$\q-$question mark function}. For example, the Figures 1,2,3 shows the graph of these in cases $\q=(\frac{2}{3},\frac{1}{3},0,0,\ldots)$, $\q=(\frac{4}{7},\frac{2}{7},\frac{1}{7},0,0,\ldots)$, $\q=(\frac{4}{7},\frac{4}{7},-\frac{1}{7},0,0,\ldots).$\\

\begin{figure}[h]
\begin{center}
\epsfig{file=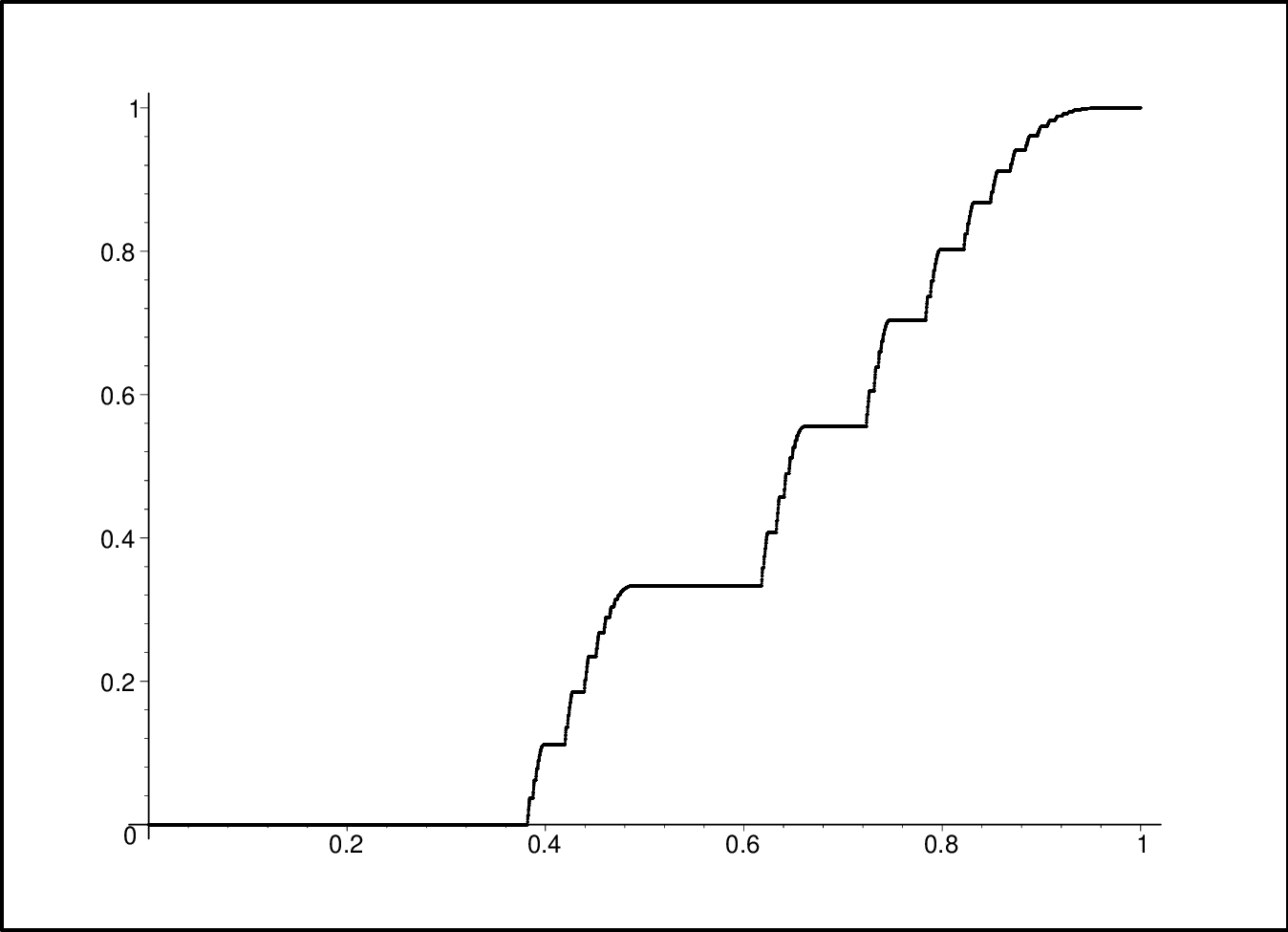,width=240pt,height=340pt,angle=-90}
\caption{$(\frac{2}{3},\frac{1}{3})$-question mark function, $x\in[0,1]$}
\end{center}
\label{figure1}
\end{figure}

\begin{figure}[h]
\begin{center}
\epsfig{file=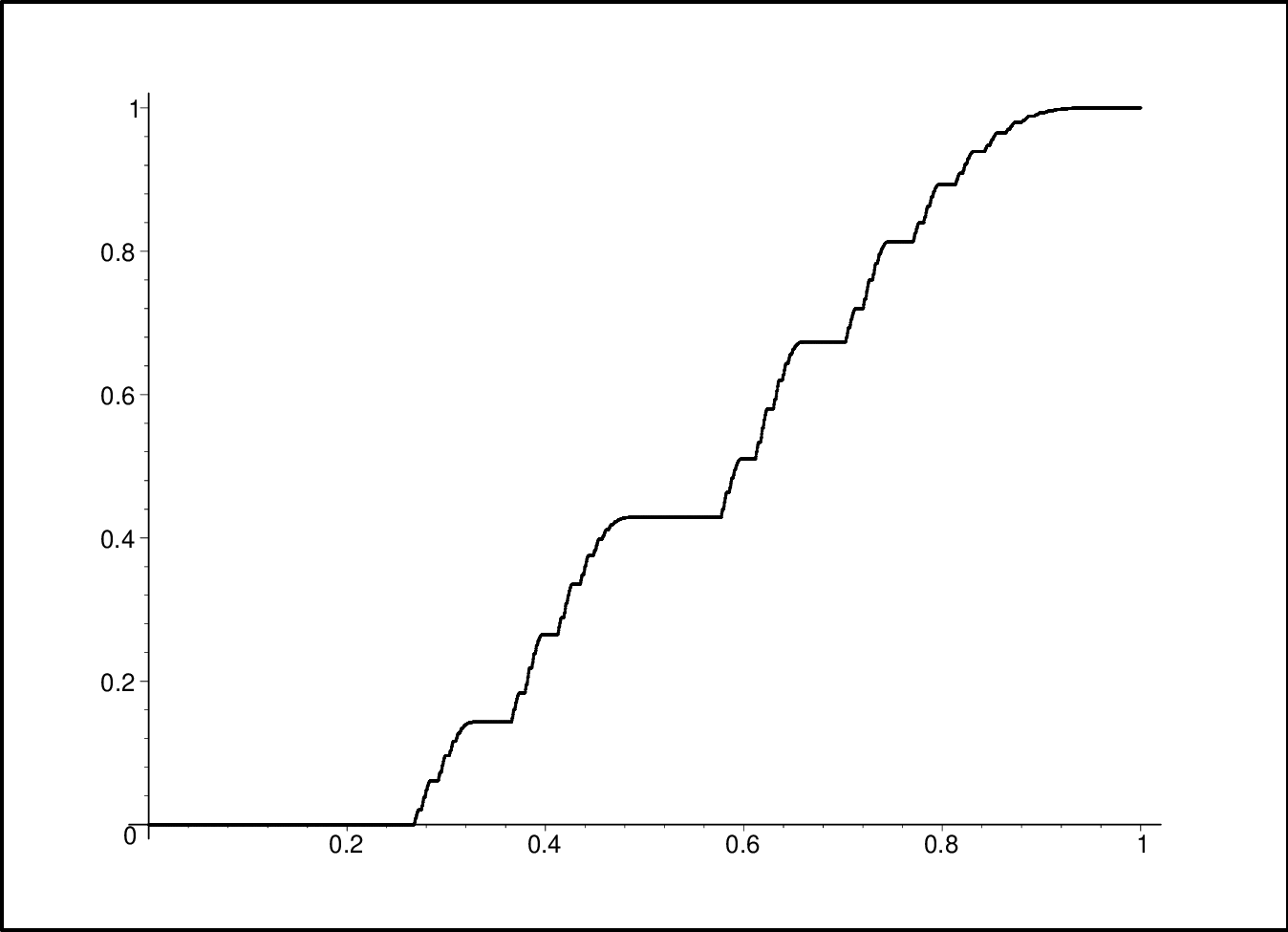,width=240pt,height=340pt,angle=-90}
\caption{$(\frac{4}{7},\frac{2}{7},\frac{1}{7})$-question mark function, $x\in[0,1]$}
\end{center}
\label{figure2}
\end{figure}

\begin{figure}[h]
\begin{center}
\epsfig{file=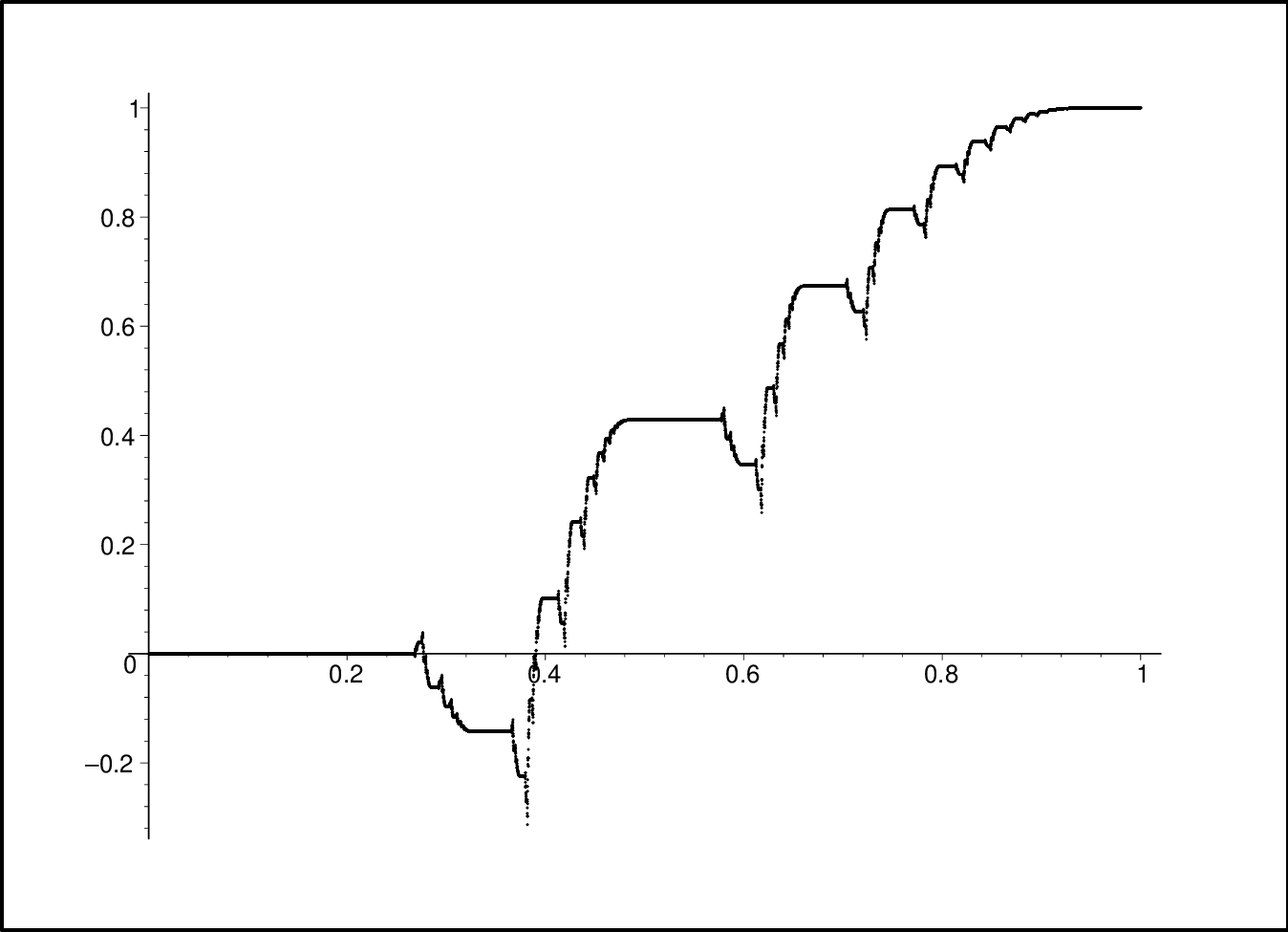,width=240pt,height=340pt,angle=-90}
\caption{$(\frac{4}{7},\frac{4}{7},-\frac{1}{7})$-question mark function, $x\in[0,1]$}
\end{center}
\label{figure3}
\end{figure}

\begin{figure}[h]
\begin{center}
\epsfig{file=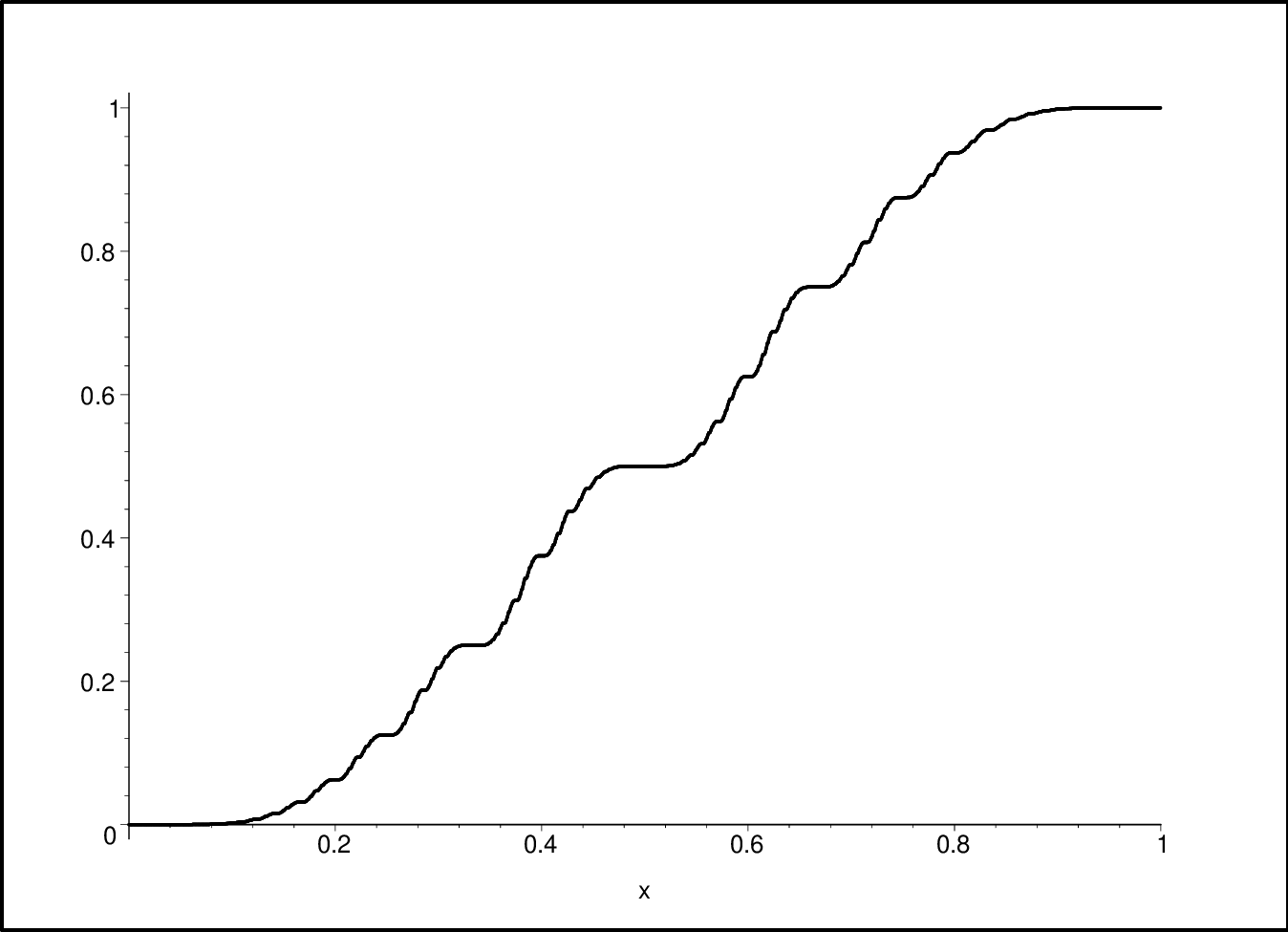,width=240pt,height=340pt,angle=-90}
\caption{The Minkowski question mark function, $x\in[0,1]$}
\end{center}
\label{figure4}
\end{figure}

As an aside, let us define
\begin{eqnarray*}
\m_{\q}(s)=\int\limits_{0}^{1}e^{xs}\d \mu_{\q}(x),\quad 
p_{\q}(s)=\sum\limits_{\ell=2}^{\infty}q_{\ell}e^{-i\ell s}.
\end{eqnarray*}
It is unknown whether $\m_{\q}(is)$ vanishes at infinity for $s\in\mathbb{R}$ in case of the Minkowski question mark function - this is \emph{the Salem's problem} \cite{alkauskas4,alkauskas5}. Most likely, all $\m_{\q}(is)$ vanish at infinity. It is out of the scope of the current paper, but we mention that the integral equation for the Laplace-Stieltjes transform of 
$?(x)$, defined by \cite{alkauskas1}
\begin{eqnarray*}
\m(s)=\int\limits_{0}^{1}e^{xs}\d ?(x),\quad s\in\mathbb{C},
\end{eqnarray*}
is compatible with this much more general construction. So, the function $\m_{\q}(s)$ is entire, and it satisfies the following integral equation
\begin{eqnarray*}
i\m_{\q}(is)p_{\q}(s)=\int\limits_{0}^{\infty}\m'_{\q}(it)J_{0}(2\sqrt{st})\d t,\quad s>0;
\end{eqnarray*}
the integral converges conditionally. On the other hand, the three term functional equation for the Stieltjes transform of $?(x)$ is compatible only with a narrow one parameter subclass of such $\q$'s which we introduce now, since this is our main object.
\section{A special subclass}\indent We will now focus on the important sequence $\q$ given by $q_{\ell}=(1-\kk)\kk^{\ell-2}$, $\ell\geq 2$, $\kk\in\mathbb{C}$, $|\kk|<1$, $|1-\kk|<1$. Let therefore $\mu_{\q}=\mu_{\kk}$ in this case.
 Let us define
\begin{eqnarray}
G(\kk,z)=\int\limits_{0}^{1}\frac{1}{\frac{1}{x}-z}\d\mu_{\kk}(x),\quad z\in\mathbb{C}\setminus[1,\infty).
\label{g-defin}
\end{eqnarray}
Note that this implies 
\begin{eqnarray*}
\frac{\partial^{s}}{\partial z^{s}}\,G(\kk,z)=s!\int\limits_{0}^{1}\frac{1}{(\frac{1}{x}-z)^{s+1}}\d\mu_{\kk}(x).
\end{eqnarray*}
\begin{prop}
\label{prop2}
The function $G(\kk,z)$ satisfies the three term functional equation
\begin{eqnarray}
G(\kk,z+1)-\kk G(\kk,z)=\frac{(1-\kk)}{(1-z)^2}G\Big{(}\kk,\frac{1}{1-z}\Big{)}+\frac{1-\kk}{1-z},\quad z\in\mathbb{C}\setminus[1,\infty).
\label{centr}
\end{eqnarray}
In particular,
\begin{eqnarray*}
G(\kk,1)-G(\kk,0)=\frac{1-\kk}{\kk}.
\end{eqnarray*}
Moreover, $G(\kk,z)=o(1)$ if $z\rightarrow\infty$ remains bounded away from $[1,\infty)$. More precisely: for $z\rightarrow\infty$ under the same condition, we have
\begin{eqnarray*}
G(\kk,z)\sim-\frac{1}{z}+\frac{\alpha}{z^2},
\text{ where }\alpha=G(\kk,0)-\frac{2-\kk}{1-\kk},\quad
\frac{\partial^{s}}{\partial z^{s}}\,G(\kk,z)=O\Big{(}z^{-(s+1)}\Big{)}.
\end{eqnarray*}
\end{prop}
\begin{proof}
First, we note the identity
\begin{eqnarray*}
\int\limits_{0}^{1}f(x)\d\mu_{\kk}(x)=\sum\limits_{\ell=2}^{\infty}(1-\kk)\kk^{\ell-2}\int\limits_{0}^{1}f\Big{(}\frac{1}{\ell-x}\Big{)}\d \mu_{\kk}(x),
\end{eqnarray*}
provided that all integrals are absolutely convergent. This follows from Proposition \ref{prop1}, the Property 3. In the special case, for $f(x)=(\frac{1}{x}-z)^{-1}$, this reduces to
\begin{eqnarray*}
G(\kk,z)=\int\limits_{0}^{1}\frac{1}{\frac{1}{x}-z}\d\mu_{\kk}(x)=
\sum\limits_{\ell=2}^{\infty}(1-\kk)\kk^{\ell-2}\int\limits_{0}^{1}\frac{1}{\ell-x-z}\d \mu_{\kk}(x).
\end{eqnarray*}
Thus,
\begin{eqnarray*}
G(\kk,z+1)-\kk G(\kk,z)=(1-\kk)\int\limits_{0}^{1}\frac{1}{1-x-z}\d\mu_{\kk}(x).
\end{eqnarray*}
Now, let us use the identity
\begin{eqnarray*}
\frac{1}{1-x-z}=\frac{1}{(1-z)^2}\cdot\frac{1}{\frac{1}{x}-\frac{1}{1-z}}+\frac{1}{1-z}.
\end{eqnarray*}
This gives the functional equation (\ref{centr}). The regularity property is immediate. \end{proof}

Using the same method as in \cite{alkauskas2} we see that
\begin{eqnarray}
G(\kk,z+1)=
(1-\kk)\sum\limits_{a,b,c,d\geq 0, \atop ad-bc=1}
\frac{\kk^{\imath(\frac{a+c}{b+d})}(1-\kk)^{\jmath(\frac{a+c}{b+d})}}{\big{[}(a+c)z-(b+d)\big{]}(cz-d)};
\label{expan}
\end{eqnarray}
here $\imath$ and $\jmath$ stand for the number of maps $T$ and $R$ (see the next section), respectively,  needed to obtain the rational number $\frac{a+c}{b+d}$ from the root $\frac{1}{1}$ in the Calkin-Wilf tree \cite{calkin-wilf}. So, $G(\kk,z)$ is holomorphic in both variables. \\

Let $\mathcal{D}=\{\kk\in\mathbb{C}:|\kk|< 1,|1-\kk|\leq 1\}$. This is the definition domain of the function $G(\kk,z)$ in variable $\kk$. If $\kk\in\mathcal{D}$ and $\kk\rightarrow 0_{+}$, then the function $\mu_{\kk}$ tends pointwise to the function which is $0$ in $[0,1)$ and $1$ at $x=1$. Thus,
\begin{eqnarray*}
\lim\limits_{\kk\rightarrow 0_{+}}G(\kk,z)=G(0,z)=\frac{1}{1-z}.
\end{eqnarray*} 
This satisfies the functional equation (\ref{centr}) in case $\kk=0$. On the other hand, if $\kk\rightarrow 1_{-}$, then the function $\mu_{\kk}$ tends pointwise to the function which is $0$ at $x=0$ and $1$ in the interval $(0,1]$. Thus, we also get
\begin{eqnarray*}
\lim\limits_{\kk\rightarrow 1_{-}}G(\kk,z)=G(1,z)\equiv 0.
\end{eqnarray*} 

\section{Mean-modular forms}
Let $\mathfrak{h}$ be the upper half plane, and let $G_{2}(z)$ stands for the holomorphic quasi-modular Eisenstein series of weight $2$ \cite{serre}:
\begin{eqnarray*}
G_{2}(z)=\frac{\pi^{2}}{3}-8\pi^{2}\sum\limits_{n=1}^{\infty}\sigma_{1}(n)e^{2\pi i n z}.
\end{eqnarray*}
We will also use the standard normalization
\begin{eqnarray*}
E_{2}(z)=\frac{3}{\pi^2}\,G_{2}(z)=1-24e^{2\pi i z}-72e^{4\pi i z}-\cdots.
\end{eqnarray*}
Also, let $E_{4}$ and $E_{6}$, as usual, be normalized Eisenstein series of weights $4$ and $6$. We know that for $z\in\mathfrak{h}$,
\begin{eqnarray*}
G_{2}(z+1)=G_{2}(z),\quad G_{2}(-1/z)=z^{2}G_{2}(z)-2\pi i z. 
\end{eqnarray*}
There exist several extensions of the space $M_{k}$ of modular forms of weight $k$. One of the extensions is the space of the so called \emph{quasi-modular forms}, which are weight $k$ elements of the ring $\mathbb{C}[E_{2},E_{4},E_{6}]$. Now we describe another 
$?(x)$-related class of extension of $M_{k}$, and will later prove that these two extensions are isomorphic! This gives unexpected view on quasi-modular forms, where $?(x)$ essentially enters the picture.\\
 
We will need the following differential identities of Ramanujan \cite{zagier}:
\begin{eqnarray}
\frac{1}{2\pi i}E'_{2}=\frac{E^{2}_{2}-E_{4}}{12},\quad 
\frac{1}{2\pi i}E'_{4}=\frac{E_{2}E_{4}-E_{6}}{3},\quad
\frac{1}{2\pi i}E'_{6}=\frac{E_{2}E_{6}-E^{2}_{4}}{2}.
\label{ramanujan}
\end{eqnarray}

A direct calculation shows that $\frac{i}{2\pi}G_{2}(z)$ satisfies the functional equation (\ref{centr}) for $z\in\mathfrak{h}$. Let, as before, the number $\kk$ belong to $\mathcal{D}$. 
If $G(\kk,z)$ is the function from the previous subsection, then, if we set 
\begin{eqnarray*}
Q(\kk,z)=-\frac{6}{\pi i}\Big{(}G(\kk,z+1)-\frac{i}{2\pi}G_{2}(z)\Big{)}=-\frac{6}{\pi i}\,G(\kk,z+1)+E_{2}(z),
\end{eqnarray*}
we see that this function is uniformly bounded for $\Im(z)>\epsilon>0$, $|\kk|<1-\epsilon$, $|1-\kk|\leq 1$, and for $z\in\mathfrak{h}$, $\kk\in\mathcal{D}$, it satisfies the functional equation
\begin{eqnarray}
\setlength{\shadowsize}{2pt}\shadowbox{$\displaystyle{\quad
f(\kk,z)=\kk f(\kk,z-1)+\frac{1-\kk}{(1-z)^{k}}f\Big{(}\kk,\frac{z}{1-z}\Big{)}\quad}$}
\label{lyg}
\end{eqnarray}
for $k=2$. The function $Q$ is fundamental function which plays the same role among mean-modular forms (see below) as $E_{2}$ plays in the theory of quasi-modular forms.\\

 Let $U,S,I,T,R$ be the standard $2\times 2$ matrixes:
\begin{eqnarray*}
U=\left(\begin{array}{cc}0 & 1 \\-1 & 1 \\ \end{array}\right),\quad
S=\left(\begin{array}{cc}0 & 1 \\-1 & 0 \\ \end{array}\right),\quad
I=\left(\begin{array}{cc}1 & 0 \\0 & 1 \\ \end{array}\right),\quad
T=\left(\begin{array}{cc}1 & 1 \\0 & 1 \\ \end{array}\right),\quad
R=\left(\begin{array}{cc}1 & 0 \\1 & 1 \\ \end{array}\right).
\end{eqnarray*}
The matrices $U,S$ satisfy $U^3=S^2=I$, and freely generate the modular group,  while $T=U^2S$, $R=US$ (all relations are considered modulo $\pm I$). Our main interest is the equation (\ref{lyg}) in case $\kk=\frac{1}{2}$, $k=2$, since this, as we have seen, is directly related to the Minkowski question mark function. Nevertheless, suppose $f(\kk,z)$ satisfies (\ref{lyg}), and let us consider this identity as the one for the function of two complex variables $\kk$ and $z$. At the one end of the real interval $\kk\in[0,1]$ (of course, $1\notin\mathcal{D}$, but suppose we are allowed to plug it), the function $f(\kk,z)$ is $T$-periodic:
\begin{eqnarray*}
f(1,z)=f(1,z)|T^{n},\quad n\in\mathbb{Z}. 
\end{eqnarray*}
The $R$-periodicity holds at the other end:
\begin{eqnarray*}
f(0,z)=f(0,z)|R^{n},\quad n\in\mathbb{Z}. 
\end{eqnarray*}
The two matrices $T$ and $R$ generate the whole modular group, and $R$ and $T$ are primitive elements there (i.e. not powers of other matrices) of infinite order. For example,
\begin{eqnarray*}
Q(1,z)=E_{2}(z),\quad 
Q(0,z)=\frac{6}{\pi i z}+E_{2}(z),
\end{eqnarray*}
which are, respectively, $T$- and $R$-periodic. So, generally, the function $f(\kk,z)$ cannot be called a modular form, but it rather as if interpolates a modular form, and  we think that the name mean-modular form (that is, a modular form on average), is apt. \\

Consider holomorphic functions $\ell(\kk)$, defined for $\kk\in\mathrm{int}(\mathcal{D})$ (interior). The set of these functions is then a ring, which we denote by $\f$ (thus, zeros of such functions can accumulate only at the boundary). 
\begin{defin}
Let $k\in2\mathbb{N}$. The function $f(\kk,z)$ is called \emph{a weight $k$ mean-modular form}, or \emph{MMF}, if 
\begin{itemize}
\item[i)] 
it is bivariate holomorphic function and satisfies the functional equation (\ref{lyg}) for $z\in\mathfrak{h}$, $\kk\in\mathcal{D}$;
\item[ii)]for every $\epsilon>0$ there exist a constant $C(\epsilon)$ such that $|f(\kk,z)|<C(\epsilon)$ for $\Im(z)>\epsilon$, $|\kk|<1-\epsilon$, $|1-\kk|\leq 1$.  
\end{itemize}
We denote the $\f-$module of mean-modular forms of weight $k$ by ${\sf Mmf}_{k}$. 
\end{defin}
In fact, there are many functions, constant in variable $\kk$, which satisfy the functional equation but fail the regularity condition. For example, when $k=2$ such functions are $j'(z)P(j(z))$, where $j(z)$  is the $j$-invariant, and $P$ is any polynomial.\\


\section{Mean-modular sections}
\begin{defin}
We call a function $T(z)$ \emph{a mean-modular section}, or \emph{MMS}, of weight $k$, if there exists a mean-modular form $f(\kk,z)$ of weight $k$ such that
\begin{eqnarray*}
T(z)=f\Big{(}\frac{1}{2},z\Big{)}.
\end{eqnarray*}
\end{defin}
Denote the $\mathbb{C}-$linear space of MMS of weight $k$ by ${\sf Mms}_{k}$. So, $\mathrm{dim}_{\mathbb{C}}({\sf Mms}_{k})=\mathrm{dim}_{\f}({\sf Mmf}_{k})$. The main motivation of this paper is thus the following facts:
\begin{itemize}
\item[$\lozenge$] if $T(z)$ is a modular form for ${\sf PSL}_{2}(\mathbb{Z})$, then $T(z)$ is a MMS of the same weight.
\item[$\lozenge$] ``Sporadic"  solutions of the three term functional equation (\ref{lyg}) (for a specific $\kk$), which are also in $M_{k}(\Gamma(N))$, do not qualify MMS (see Section \ref{sporadic}).
\item[$\lozenge$] Most importantly,
\begin{eqnarray*}
\int\limits_{0}^{1}\frac{x}{1-x(z+1)}\d ?(x)-\frac{i}{2\pi}G_{2}(z).
\end{eqnarray*}
is a MMS of weight $2$.
\end{itemize}
Our first main result of this paper shows that the space ${\sf Mms}_{k}$ possess the same property which implies ``the unreasonable effectiveness of modular forms", acccording to Don Zagier. Let $ \widetilde{M_{k}}$ be the space of quasi-modular forms of weight $k$.
\begin{thm}
\label{thm1}
The linear space ${\sf Mms}_{k}$ is finite dimensional, and there exists the canonical isomorphism
\begin{eqnarray*}
\psi:{\sf Mms}_{k}\mapsto \widetilde{M_{k}},
\end{eqnarray*}
which is given by
\begin{eqnarray*}
\psi(T(z))=\psi\Big{(}f\Big{(}\frac{1}{2},z\Big{)}\Big{)}=f(1,z),
\end{eqnarray*}
where $f$ is a mean-modular form giving rise to a mean modular section $T(z)$.
\end{thm}
Thus, for example, $\psi(Q)=E_{2}$.
\section{Homomorphisms of MMF}
\subsection{Serre's derivative}Similarly as in (\cite{zagier}, Section 5.1), we prove the following
\begin{prop}If $f$ is a $MMF$ of weight $k$, then
\begin{eqnarray*}
\vartheta_{k}(f)=\frac{1}{2\pi i}\frac{\partial}{\partial z}f(\kk,z)-\frac{k}{12}\cdot E_{2}(z)\cdot f(\kk,z)
\end{eqnarray*}
is a MMF of weight $k+2$. 
\end{prop}
If we apply the operator $\vartheta_{k}$ to weight $k$ MMF and it is unambiguous, we may drop the subscript $k$.
\begin{proof}Let, for simplicity, $f'=\frac{\partial}{\partial z}f$, and we ommit the first variable $\kk$. Then, if $f$ is a MMF, then
\begin{eqnarray}
f'(z)-\kk f'(z-1)-\frac{1-\kk}{(1-z)^{k+2}}f'\Big{(}\frac{z}{1-z}\Big{)}
=\frac{(1-\kk)k}{(1-z)^{k+1}}f\Big{(}\frac{z}{1-z}\Big{)}.
\label{der}
\end{eqnarray}
Further, let $u(z)=f(z)E_{2}(z)$. Then, according to the properties of $G_{2}(z)$, we have:
\begin{eqnarray*}
u(z)&=&f(z)E_{2}(z),\\
\kk u(z-1)&=&\kk f(z-1)E_{2}(z),\\
\frac{1-\kk}{(1-z)^{k+2}}u\Big{(}\frac{z}{1-z}\Big{)}&=&\frac{1-\kk}{(1-z)^k}f\Big{(}\frac{z}{1-z}\Big{)}E_{2}(z)+\frac{6i(1-\kk)}{\pi (1-z)^{k+1}}f\Big{(}\frac{z}{1-z}\Big{)}.
\end{eqnarray*}
Thus,
\begin{eqnarray}
u(z)-\kk u(z-1)-\frac{1-\kk}{(1-z)^{k+2}}u\Big{(}\frac{z}{1-z}\Big{)}
=-\frac{(1-\kk)6i}{\pi(1-z)^{k+1}}f\Big{(}\frac{z}{1-z}\Big{)}.
\label{derr}
\end{eqnarray}
Comparing (\ref{der}) and (\ref{derr}), we get the needed property that $\frac{1}{2\pi i }f'-\frac{k}{12}E_{2}f$ is a MMF. The second assertion of the proposition is obvious.
\end{proof}
We can see indeed that the operator $\vartheta$ is indeed a ``derivation". Let $f$ be weight $k$ mean modular form, and $g$ be weight $\ell$ modular form. Then $fg$ is weight $k+\ell$ MMF, and
\begin{eqnarray}
\vartheta_{k+\ell}(fg)=\frac{1}{2\pi i}(f'g+fg')-\frac{k+\ell}{12}E_{2}fg=\vartheta_{k}(f)g+f\vartheta_{\ell}(g);
\label{derivation}
\end{eqnarray}
So, $\vartheta$ satisifes the Leibniz rule.
\section{Isomorphism between mean-modular forms and quasi-modular forms} 
Let, as before, $M_{k}$ and $\widetilde{M}_{k}$ stand for the $\mathbb{C}-$linear space of weight $k$ modular and quasi-modular forms, respectively. We will define another linear operator from the space of MMF of weight $k$ to scalar extension of $\widetilde{M}_{k}$.
\begin{defin}Let us define
\begin{eqnarray*}
M^{\f}_{k}=M_{k}\otimes_{\mathbb{C}}\f,\quad \widetilde{M^{\f}_{k}}=\widetilde{M}_{k}
\otimes_{\mathbb{C}}\f.
\end{eqnarray*}
\end{defin}
So, these two are $\f-$ modules of weight $k$ modular and quasi-modular forms, respectively, only the ``constants" are changed from the field $\mathbb{C}$ to the ring $\f$. So,
\begin{eqnarray*}
M^{\f}_{k}\subset{\sf Mmf}_{k}.
\end{eqnarray*}
Now, define the linear map
\begin{eqnarray*}
\E:{\sf Mmf}_{k}\mapsto\widetilde{M^{\f}_{k}}
\end{eqnarray*}
as follows. If $f(\kk,z)$ is a MMF, then put
\begin{eqnarray*}
\E f(\kk,z)=\lim\limits_{n\in\mathbb{Z}, n\rightarrow\infty}f(\kk,z+n).
\end{eqnarray*}
Thus, if $f\in M^{\f}_{k}$, then $\E f=f$.
Note that it really does not matter that we work over $\f$ - our chief interest is the space ${\sf Mms}_{k}$ and so the special case $\kk=\frac{1}{2}$, and this will reduce the ring of constants back to $\mathbb{C}$.
For example, based on Proposition \ref{prop2}, we have
\begin{eqnarray*}
\E{Q}&=&E_{2};\\
\E\vartheta(Q)&=&-\frac{1}{12}E^{2}_{2}-\frac{1}{12}E_{4};\\
\mathfrak{E}{\vartheta^{2}(Q)}&=&\frac{1}{72}E^{3}_{2}+\frac{1}{72}E_{2}E_{4}+\frac{1}{36}E_{6}.
\end{eqnarray*}
Let us define
\begin{eqnarray*}
{\sf Mmf}=\mathop{\oplus}_{k\in 2\mathbb{N}_{0}}{\sf Mmf}_{k}.
\end{eqnarray*}
Our basic result of this paper, which also implies Theorem \ref{thm1}, reads as follows. 
\begin{thm}
\label{thm2}
The following holds.
\begin{itemize}
\item[i)]
The map $\E$ is an isomorphism of $\f-$modules. This provides the product structure inside MMF by the following construction. If $f\in{\sf Mmf}_{k}$, $g\in{\sf Mmf}_{\ell}$, then we define $f\star g\in{\sf Mmf}_{k+\ell}$ by
\begin{eqnarray*}
f\star g=\E^{-1}\big{(}\E(f)\cdot\E(g)\big{)}.
\end{eqnarray*} 
The product $``\star"$ turns ${\sf Mmf}$ into the graded algebra. If $g\in M^{\f}_{k}$, then
\begin{eqnarray*}
f\star g=f\cdot g.
\end{eqnarray*}
\item[ii)]The map $\E$ commutes with the derivation $\vartheta$. That is, let $f\in{\sf Mmf}_{k}$. Then
\begin{eqnarray*}
\E\vartheta(f)=\vartheta(\E f),
\end{eqnarray*}
where the second $\vartheta$ is the map $\widetilde{M^{\f}_{k}}\mapsto\widetilde{M^{\f}_{k+2}}$.
\item[iii)]The map $\vartheta$ and the product $\star$ are compatable with the Leibniz rule; that is, if $f\in{\sf Mmf}_{k}$, $g\in{\sf Mmf}_{\ell}$, then
\begin{eqnarray*}
\vartheta(f\star g)=\vartheta(f)\star g+f\star\vartheta(g).
\end{eqnarray*}
\end{itemize}
\end{thm}
\begin{proof}The part ii) is proved by a direct check. We have already verified iii) in case $g\in M^{\f}_{k}$. Let $f\in{\sf Mmf}_{k}$, $g\in{\sf Mmf}_{\ell}$. Then we have:
\begin{eqnarray*}
\vartheta(f\star g)&\mathop{=}\limits^{i)}&\vartheta\Big{(}\E^{-1}\big{(}\E f\cdot\E g\big{)}\Big{)}\mathop{=}\limits^{ii)}
\E^{-1}\Big{(}\vartheta\big{(}\E f \cdot\E g \big{)}\Big{)}\\
&\mathop{=}\limits^{(\ref{derivation})}&\E^{-1}\Big{(}\vartheta(\E f)\cdot\E g+\E f\cdot\vartheta(\E g)\Big{)}\\
&\mathop{=}\limits^{ii)}&\E^{-1}\Big{(}\E\vartheta(f)\cdot\E g\Big{)}+\E^{-1}\Big{(}\E f\cdot\vartheta(\E g)\Big{)}\\
&\mathop{=}\limits^{i)}&\vartheta(f)\star g+f\star\vartheta(g).
\end{eqnarray*} 
This proves iii).
\end{proof}
As a warming up, suppose that we have already proven that
\begin{eqnarray*}
\E\vartheta^{s}(g)=q_{s}E^{s+1}_{2}+\{\text{terms involving }E_{2}\text{ and at least one of }E_{4},E_{6}\text{ of total weight }2s+2\}.
\end{eqnarray*}
Then from (\ref{ramanujan}) and the definition of $\vartheta$ we derive that
\begin{eqnarray}
q_{0}=1,\quad q_{s+1}=-\frac{s+1}{12}q_{s}\Longrightarrow q_{s}\neq 0.
\label{qs}
\end{eqnarray}
The $\E$ operation will help us to rule out linear dependence among the set of MMF of weight $k$, which we will now construct. The crucial ingrediant is a well-known and important fact which claims that the Eisenstein series $E_{2}, E_{4}$ and $E_{6}$ are algebraically independent \cite{zagier}.

\subsection{Building MMF} 
So, we will now list the elements of ${\sf Mmf}_{k}$ whch we already know. For simplicity, we ommit the first variable $\kk$.\\

\begin{itemize}

\item[Weight] $2$, $\mathrm{dim}_{\f}({\sf Mmf}_{2})=1$: $Q$.

\item[Weight] $4$, $\mathrm{dim}=2$: $\vartheta(Q)$, $E_{4}$. These are linearly independent, since the first MMF is non-periodic, while the second is. 

\item[Weight] $6$, $\mathrm{dim}=3$: $\vartheta^{2}(Q)$, $E_{4} Q$, $E_{6}$. These three MMF are also linearly independent. Indeed suppose the contrary,
\begin{eqnarray*}
a\vartheta^{2}(Q)+bE_{4}Q+cE_{6}=0.
\end{eqnarray*}
Substitute $z\mapsto z+n$ and take now the limit $n\rightarrow\infty$. Thus, in fact we are applying the $\E$ operator:
\begin{eqnarray}
a\E\vartheta^{2}(Q)+bE_{4}\E Q+cE_{6}=0.
\label{weight6}
\end{eqnarray}
This is the combination of the products of $E_{2}$, $E_{4}$ and $E_{6}$. The coefficient at $E_{2}^{3}$ is $aq_{2}$, so the algebraic independence of $E_{2}$, $E_{4}$ and $E_{6}$ implies that $a=0$. Then the coefficient at $E_{2}E_{4}$ of the remaining terms in (\ref{weight6}) is $bq_{0}$, and this again yields $b=0$. We therefore find that $a=b=c=0$. Essentially the same method works to show that for every weight, the below constructed MMF are linearly independent. We can also calculate
\begin{eqnarray*}
\E Q\cdot\E\vartheta(Q)=-\frac{1}{12}E^{3}_{2}-\frac{1}{12}E_{2}E_{4}=-6\E\vartheta^{2}(Q)+\frac{1}{6}\E E_{6}.
\end{eqnarray*}
So, 
\begin{eqnarray*}
Q\star\vartheta(Q)=-6\vartheta^{2}(Q)+\frac{1}{6}E_{6}.
\end{eqnarray*}

\item[Weight] $8$, $\mathrm{dim}=4$:  $\vartheta^{3}(Q)$, $E_{4}\vartheta(Q)$, $E_{6}Q$, $E_{8}$. Note that, for example, $\vartheta(E_{4} Q)$ does not give anything new, since using the properties (\ref{ramanujan}) and (\ref{derivation}), we have:
\begin{eqnarray*}
\vartheta(E_{4} Q)=\vartheta(E_{4})Q+E_{4}\vartheta(Q)=
-\frac{1}{3}E_{6}Q+E_{4}\vartheta(Q).
\end{eqnarray*}

\item[Weight] $10$, $\mathrm{dim}=5$: $\vartheta^{4}(Q)$, $E_{4}\vartheta^{2}(Q)$, $E_{6}\vartheta(Q)$, $E_{8}Q$, $E_{10}$.\\

\item[Weight] $12$, $\mathrm{dim}=7$: $\vartheta^{5}(Q)$, $E_{4}\vartheta^{3}(Q)$, $E_{6}\vartheta^{2}(Q)$, 
$E_{8}\vartheta(Q)$,  $E_{10}Q$, $E_{12}$, $\Delta$.\\

\item[Weight] $14$, $\mathrm{dim}=8$: $\vartheta^{6}(Q)$, $E_{4}\vartheta^{4}(Q)$, $E_{6}\vartheta^{3}(Q)$, $E_{8}\vartheta^{2}(Q)$, $E_{10}\vartheta(Q)$, $E_{12}Q$, $\Delta Q$, $E_{14}$.\\

\item[Weight] $16$, $\mathrm{dim}=10$: $\vartheta^{7}(Q)$, $E_{4}\vartheta^{5}(Q)$, $E_{6}\vartheta^{4}(Q)$, $E_{8}\vartheta^{3}(Q)$, $E_{10}\vartheta^{2}(Q)$, $E_{12}\vartheta(Q)$, $\Delta\vartheta(Q)$, $E_{14}Q$, $E_{16}$, $\Delta E_{4}$. These $10$ MMF are linearly independent, which is proven by the same method. Indeed, we take the ``$\E$" operator of the linear dependeancy of the above $10$ MMF. That all coefficients vanish, we prove by inspecting first the coeffcient at $E^{8}_{2}$, then at $E_{4}E^{6}_{2}$, and so on. In fact, there are already two terms which contain $E^{2}_{2}$; these are $E_{12}\E\vartheta(Q)$, $\Delta\E\vartheta(Q)$. But $E_{12}$ and $\Delta$ are linearly independent.

\end{itemize}
So, we see that, for even $k\geq 2$,
\begin{eqnarray*}
\mathrm{dim}_{\f}({\sf Mmf}_{k})=
\sum\limits_{\ell=0}^{k/2}\mathrm{dim}_{\mathbb{C}}(M_{k-2\ell})+1.
\end{eqnarray*}

\section{Modular solutions}
\label{sporadic}
We will now show that the requirement that a mean-modular form is holomorphic in variable $\kk$ is essential and strong, since there exists too many functions which satisfy (\ref{lyg}) for certain particular fixed $\kk$. Moreover, such functions can even be modular forms for congruence subgroups. Consequently, such ``sporadic" solutions do not qualify as MMF.\\

Let $N\in\mathbb{N}$, $k\in 2\mathbb{N}$. Consider the space of modular forms $M_{k}(\Gamma(N))$. Let $\mathbf{u}(z)=(u_{1}(z),u_{2}(z),\ldots, u_{\ell}(z))$ be the basis of this space. We know that for any $u(z)\in M_{k}(\Gamma(N))$, both $u(z-1)$ and $(1-z)^{-k}u(z/(1-z))$ belong to  $M_{k}(\Gamma(N))$. This simply follows from the fact that $\Gamma(N)$ is a normal subgroup of $\Gamma(1)$. So there exists two matrices $A$ and $B$ such that
\begin{eqnarray*}
\mathbf{u}(z-1)^{T}=A\mathbf{u}(z)^{T},\quad 
(1-z)^{-k}\mathbf{u}(z/(1-z))^{T}=B\mathbf{u}(z)^{T}.
\end{eqnarray*}
We want a function
\begin{eqnarray*}
\sum\limits_{i=1}^{\ell}a_{i}u_{i}(z),\quad a_{i}\in\mathbb{C},
\end{eqnarray*}
to satisfy (\ref{lyg}). There exists a non-zero vector $(a_{1},\ldots,a_{\ell})$  if and only if the determinant of the matrix $I-\kk A-(1-\kk)B$ vanishes:
\begin{eqnarray*}
P_{N,k}(\kk):=\det\big{(}I-B+\kk(B-A)\big{)}=0.
\end{eqnarray*}
So, each pair $N\geq 2$, $k\in 2\mathbb{N}$, generates the polynomial $P_{N,k}(\kk)$, and each root of this polynomial produces the element of $M_{k}(\Gamma(N))$ that also satisfies (\ref{lyg}). 
For example, let $N=2$, $k=2$. The space $M_{2}(\Gamma(2))$ is $2-$dimensional and is spanned by $\vartheta^{4}(0,1/2;z)$ and $\vartheta^{4}(1/2,0;z)$, the Jacobi's theta functions (see further). The polynomial $P_{2,2}(\kk)=3\kk(1-\kk)$. So, in this case only $\kk=0$ belongs to $\mathcal{D}$. Anyway, using approach via theta constants, we have calculated many possible $\kk$, and there are plenty of whose which belong to $\mathcal{D}$; for example, $\kk=\frac{1}{2}+\frac{1}{2}i$ is one of them. The approach via theta consists consists of the following. \\

Let us define, for $a,b\in\mathbb{R}$, $k\in\mathbb{N}$ (no relation to the weight!), $z\in\mathfrak{h}$, the theta-constants \cite{dolg,mumford}
\begin{eqnarray*}
\vartheta(a,b;z)_{k}&=&\sum\limits_{n\in\mathbb{Z}}e^{k\pi i[(a+n)^{2}z+2b(a+n)]}=\vartheta(a,kb;kz)_{1};\\
\vartheta(a,b;z)'_{k}&=&2k\pi i\sum\limits_{n\in\mathbb{Z}}(a+n)e^{k\pi i[(a+n)^{2}z+2b(a+n)]}=k\vartheta(a,kb;kz)'_{1}.
\label{theta}
\end{eqnarray*}
(No relation to the ``$\vartheta$" map!) The next identities are checked directly; they are either immediate, or follow from the Poisson summation formula.
\begin{prop}The functions $\vartheta(a,b;z)_{k}$ and $\vartheta(a,b;z)_{k}'$ for rational $a,b$ are modular forms of weights $1/2$ and $3/2$, respectively. Further, we have
\begin{itemize}
\item[1-1')]$\vartheta(a+1,b;z)_{k}=\vartheta(a,b;z)_{k}$;
\item[2-2')]$\vartheta(a,b+\frac{1}{k};z)_{k}=e^{2\pi ia}\vartheta(a,b;z)_{k}$;
\item[3-3')]$\vartheta(a,b;z+1)_{k}=e^{-k\pi i(a^{2}+a)}\vartheta(a,b+a+\frac{1}{2};z)_{k}$;
\item[4)]$\vartheta(-a,-b;z)_{k}=\vartheta(a,b;z)_{k}$;
\item[4')]$\vartheta(-a,-b;z)'_{k}=-\vartheta(a,b;z)'_{k}$;
\item[5)]$\vartheta(a,b;-\frac{1}{z})_{k}=k^{-1/2}(-iz)^{1/2}e^{2k\pi i ab}\sum\limits_{s=0}^{k-1}\vartheta(b+\frac{s}{k},-a;z)_{k}$.
\item[5')]$\vartheta(a,b;-\frac{1}{z})'_{k}=k^{-1/2}i(-iz)^{3/2}e^{2k\pi i ab}\sum\limits_{s=0}^{k-1}\vartheta(b+\frac{s}{k},-a;z)'_{k}$.
\end{itemize}
1-1', 2-2' and 3-3' mean that the same transformation rules hold for $\vartheta(a,b;z)_{k}$ and $\vartheta(a,b;z)'_{k}$.
\end{prop}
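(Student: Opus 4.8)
The plan is to derive every ``primed'' identity from its unprimed companion, so that only the six statements for $\vartheta(a,b;z)_k$ itself require direct verification. Differentiating term by term one sees that $\vartheta(a,b;z)_k'=\frac{\partial}{\partial b}\vartheta(a,b;z)_k$; hence applying $\frac{\partial}{\partial b}$ to 1-1'), 2-2') and 3-3') reproduces literally the same rules with $\vartheta_k$ replaced by $\vartheta_k'$ (the prefactor in 3-3') does not involve $b$), and applying it to 4) yields 4') because the second argument is $-b$. For 5') one differentiates 5) in $b$: on the right-hand side one uses $\frac{\partial}{\partial b}\vartheta(b+\frac{s}{k},-a;z)_k=z\,\vartheta(b+\frac{s}{k},-a;z)_k'-2k\pi i a\,\vartheta(b+\frac{s}{k},-a;z)_k$ (derivative in the \emph{first} slot), the $\vartheta_k$-terms cancel against the derivative of $e^{2k\pi i ab}$, and the leftover factor $z(-iz)^{1/2}$ is rewritten as $i(-iz)^{3/2}$. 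The assertion that for rational $a,b$ these functions are modular forms of weights $\tfrac12$ and $\tfrac32$ then follows from 3) and 5): the transformations there move $(a,b)$ only by vectors with controlled denominators, while 1-1') and 2-2') reduce the arguments modulo $1$, so the finitely many theta-constants of a given level transform linearly among themselves with automorphy factor $(cz+d)^{1/2}$ (up to a root of unity); holomorphy on $\mathfrak h$ and at the cusps is visible from the $q$-expansion in the definition (see also \cite{dolg,mumford}).

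It remains to prove 1-1'), 2-2'), 3-3'), 4) and 5) for $\vartheta_k$. Identity 1-1') is a shift of the summation index $n\mapsto n+1$; identity 4) is the substitution $n\mapsto -n$ combined with $(-a+n)^2=(a-n)^2$. For 2-2'), replacing $b$ by $b+\frac1k$ adds $2\pi i(a+n)$ to the exponent, and $e^{2\pi i(a+n)}=e^{2\pi i a}$. For 3-3'), replacing $z$ by $z+1$ adds $k\pi i(a+n)^2=k\pi i(a^2+2an+n^2)$ to the exponent, whereas the shift $b\mapsto b+a+\tfrac12$ adds $k\pi i(2a^2+2an+a+n)$; pulling out the common factor $e^{-k\pi i(a^2+a)}$ leaves the discrepancy $k\pi i(n^2-n)=k\pi i\,n(n-1)$, which is a multiple of $2\pi i$ since $n(n-1)$ is even.

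The substantial identity is 5), which I would establish by Poisson summation. Set $f(x)=\exp\!\big(-\tfrac{k\pi i}{z}(a+x)^2+2k\pi i b(a+x)\big)$; since $\Im z>0$ this is a Schwartz function, so $\vartheta(a,b;-1/z)_k=\sum_{n\in\mathbb Z}f(n)=\sum_{m\in\mathbb Z}\widehat f(m)$. After the substitution $y=a+x$ the Fourier integral becomes $\widehat f(m)=e^{2\pi i ma}\int_{\mathbb R}\exp\!\big(-\tfrac{k\pi i}{z}y^2+2\pi i(kb-m)y\big)\,\d y$; since $-i/z$ has positive real part, the Gaussian formula $\int_{\mathbb R}e^{-\alpha y^2+\beta y}\,\d y=\sqrt{\pi/\alpha}\,e^{\beta^2/4\alpha}$ with principal branches gives $\widehat f(m)=k^{-1/2}(-iz)^{1/2}\,e^{2\pi i ma}\,e^{\pi i(kb-m)^2 z/k}$. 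Finally one reorganizes $\sum_m e^{2\pi i ma}e^{\pi i(kb-m)^2 z/k}$ by writing $m=kn+s$ with $0\le s\le k-1$, $n\in\mathbb Z$, and then replacing $n$ by $-n$: one gets $(kb-m)^2/k=k(b-\tfrac sk+n)^2$, the phase separates off exactly $e^{2k\pi i ab}$, and what remains is $\sum_{s=0}^{k-1}\vartheta(b-\tfrac sk,-a;z)_k$; since by 1-1') the theta-constant depends on its first argument only modulo $1$, the set $\{b-\tfrac sk\}_{s}$ may be replaced by $\{b+\tfrac sk\}_{s}$, yielding 5). (Alternatively, using $\vartheta(a,b;z)_k=\vartheta(a,kb;kz)_1$ one reduces everything to the case $k=1$, where the $s$-sum degenerates to a single term and 5) is the classical Jacobi theta inversion.) I expect the only real difficulty to be keeping the branch of $(-iz)^{1/2}$ and the accumulated exponential phases consistent throughout; the rest is bookkeeping.
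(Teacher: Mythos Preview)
Your proof is correct and follows exactly the approach indicated in the paper, which merely states that the identities ``are checked directly; they are either immediate, or follow from the Poisson summation formula.'' Your added device of obtaining each primed identity from its unprimed companion via $\vartheta_k'=\partial_b\vartheta_k$ is a clean organizational choice, but not a genuinely different route.
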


So, we start from any product of these theta constants, that include only rational parameters $a,b$, and which amount to the total weight of, say, $2$. This function satisfies transformation properties under  $z\mapsto z+1$, $z\mapsto -z^{-1}$. It belongs to the finite orbit, and thus this also reduces to the condition for the determinant. For example, let us consider the simplest case of weight $2$ and when these products are in fact $4$th powers of theta constants.

\subsection{Theta functions $\vartheta^{4}(a,b;z)_{1}$ for $4a,4b\in\mathbb{Z}$} There are three orbits in this case. First, the orbit-singleton $(\frac{1}{2},\frac{1}{2})$, which produce a zero theta constant. Further, the $3$-element orbit $(0,0)$, $(\frac{1}{2},0)$, $(0,\frac{1}{2})$, which was already investigated; these three functions are related via the Jacobi identity:
\begin{eqnarray*}
\vartheta^{4}(0,0;z)=\vartheta^{4}(1/2,0;z)+\vartheta^{4}(0,1/2;z).
\end{eqnarray*}
 The third orbit consists of $6$ elements $(0,\frac{1}{4})$, $(\frac{1}{4},0)$, $(\frac{1}{4},\frac{1}{4})$, $(\frac{3}{4},\frac{1}{4})$, $(\frac{1}{4},\frac{1}{2})$, and $(\frac{1}{2},\frac{1}{4})$. Therefore,
\begin{eqnarray*}
\mathbf{u}(z)^{T}=\left(\begin{array}{c}
\vartheta^4(0,1/4;z) \\
\vartheta^4(1/4,0;z)\\
\vartheta^4(1/4,1/4;z)\\
\vartheta^4(3/4,1/4;z)\\
\vartheta^4(1/4,1/2;z)\\
\vartheta^4(1/2,1/4;z)\\
\end{array}\right),
\end{eqnarray*}
and the space generated by all six components is invariant under the action of  $T$ and $S$.
\subsection{Theta functions $\vartheta^{4}(a,b;z)_{1}$ for $(6a,6b)\in\mathbb{Z}^2$, $(2a,2b)\notin\mathbb{Z}^2$} In this case the theta functions split into three orbits: $Q_{1}$, consisting of $4$ functions with rational pairs $(a,b)=(\frac{1}{6},\frac{1}{6})$, $(\frac{5}{6},\frac{1}{6})$, $(\frac{1}{6},\frac{1}{2})$, $(\frac{1}{2},\frac{1}{6})$; $Q_{2}$, consisting of $4$ rational pairs $(\frac{1}{3},\frac{1}{3})$, $(\frac{1}{3},\frac{1}{2})$, $(\frac{1}{2},\frac{1}{3})$, $(\frac{2}{3},\frac{1}{3})$, and $Q_{3}$, consisting of $8$ pairs $(0,\frac{1}{6})$, $(0,\frac{1}{3})$, $(\frac{1}{6},0)$, $(\frac{1}{6},\frac{1}{3})$, $(\frac{1}{3},0)$, $(\frac{1}{3},\frac{1}{6})$, $(\frac{2}{3},\frac{1}{6})$, $(\frac{5}{6},\frac{1}{3})$. For example, 
\begin{eqnarray*}
\mathbf{u}(z)^{T}=\left(\begin{array}{c}
\vartheta^4(1/6,1/6;z) \\
\vartheta^4(5/6,1/6;z)\\
\vartheta^4(1/6,1/2;z)\\
\vartheta^4(1/2,1/6;z)\\
\end{array}\right),
\end{eqnarray*}
 and the space generated by all four components is invariant under the action of  $T$ and $S$; this is the subspace of $M_{2}(\Gamma_{2}(18))$. In fact, we can use not only the fourth powers but products of different theta constants, this produces the plethora of solutions to (\ref{lyg}) with many different algebraic $\kk$.

\end{document}